\newcommand{\R}{\mathbb{R}}
\newcommand{\N}{\mathbb{N}}
\newcommand{\inv}{^{-1}}
\newcommand{\diam}{{\operatorname{diam}}}
\newcommand{\eps}{\varepsilon}
\newtheorem{theorem}{Theorem}[section]{\bf}{\it}
\newtheorem{lemma}[theorem]{Lemma}{\bf}{\it}
{\bf}{\it}
\newtheorem{corollary}[theorem]{Corollary}{\bf}{\it}
{\bf}{\it}
{\bf}{\it}
\newtheorem*{theorem*}{Theorem}
\newtheorem{remark}{Remark}
\theoremstyle{remark}
\subjclass[2010]{30C65 (30L10, 57M12)}
\begin{document}

\title{Note on local-to-global properties of BLD-mappings}
\author{$\R$ami Luisto}
\address{Department of Mathematics and 
Statistics, P.O. Box 68 (Gustaf H\"allstr\"omin katu 2b), 
FI-00014 University of Helsinki, Finland}
\email{rami.luisto@helsinki.fi}
\thanks{The author is supported by the 
Academy of Finland project \#256228 and
the V\"ais\"al\"a foundation.}

\begin{abstract}
  We give a proof, based on Lipschitz quotient mappings,
  for the fact that limits
  of BLD-mappings between manifolds of bounded geometry
  are BLD. Furthermore we show that such mappings share 
  some properties of covering maps and especially
  have no asymptotic values.
\end{abstract}

\maketitle

\section{Introduction}

In this note we show that a uniform bound on the local
multiplicity for BLD-mappings gives rise to global properties
of BLD-mappings. BLD-mappings were first defined
by Martio and V\"ais\"al\"a
in \cite{MartioVaisala} as a special case of quasiregular mappings.
In this note we use the following definition which
was given as an equivalent condition in 
\cite[Theorem 2.16.]{MartioVaisala}.
A mapping $f \colon M \to N$ between metric spaces
is called an $L$-BLD-mapping, with $L \geq 1$, or a map of 
\emph{bounded length distortion},
if it is a continuous, open and discrete map for which
there exists a constant
$L \geq 1$ such that
for any path $\beta \colon [0,1] \to M$
we have
\begin{align}\label{eq:BLD}
  L\inv \ell(\beta)
  \leq \ell(f \circ \beta)
  \leq L \ell(\beta),
\end{align}
where $\ell( \cdot )$ is the length of a path.
Note that between length spaces
an $L$-BLD-mapping is always $L$-Lipschitz.

Another essential class of mappings in this
paper is the class of \emph{Lipschitz quotient mapping}.
A mapping $f \colon M \to N$
between two metric spaces 
is called an \emph{$L$-Lipschitz quotient}
($L$-LQ for short) mapping if 
\begin{align}\label{eq:LQ-def}
  B_N(f(x),L \inv r)
  \subset f B_M(x,r)
  \subset B_N(f(x),L r)
\end{align}
for all $x \in M$ and $r > 0$.
$L$-BLD-maps are $L$-LQ-mappings in length spaces,
see e.g.\
\cite[Remark 3.16(c)]{HeinonenRickman}.

One of our main tools is the following
characterization theorem of BLD-mappings.
For the definition of bounded geometry see Section 2.
\begin{theorem}\label{thm:Characterization}
  Let $M$ and $N$ be manifolds with bounded geometry.
  An $L$-BLD-mappings is a discrete $L$-LQ
  mapping, and a discrete $L$-LQ-mapping is always
  $L'$-BLD, where $L'$ depends only on the data.
  Furthermore, if the manifolds have strongly bounded geometry,
  $L' = L$.
\end{theorem}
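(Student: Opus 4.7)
The forward direction, that an $L$-BLD-mapping is a discrete $L$-LQ-mapping, is essentially the cited \cite[Remark 3.16(c)]{HeinonenRickman}: since manifolds of bounded geometry are length spaces, the path-length estimate \eqref{eq:BLD} converts into the ball inclusions \eqref{eq:LQ-def}, and discreteness is already part of the BLD definition. So my effort would go entirely into the converse.

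For the converse, suppose $f\colon M\to N$ is discrete and $L$-LQ. The inclusion $B_N(f(x),r/L)\subset fB_M(x,r)$ in \eqref{eq:LQ-def} gives openness of $f$, and the inclusion $fB_M(x,r)\subset B_N(f(x),Lr)$ gives that $f$ is $L$-Lipschitz by sending $r\searrow d_M(x,y)$. Thus $f$ is an open, discrete, continuous map between two manifolds of the same dimension, and the classical path-lifting theory (as in \cite{HeinonenRickman}) applies: every path in $N$ starting at $f(x)$ admits a maximal lift through $x$. The upper estimate $\ell(f\circ\beta)\le L\ell(\beta)$ in \eqref{eq:BLD} is immediate from the $L$-Lipschitz bound. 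For the lower estimate I would fix $\beta\colon[0,1]\to M$, partition $[0,1]$ as $0=t_0<\cdots<t_n=1$ finely enough that each image subarc $f\circ\beta|_{[t_{i-1},t_i]}$ lies inside a chart ball of controlled geometry where length and ambient distance are comparable, and apply the lower LQ inclusion on each piece: it produces a preimage of $f(\beta(t_i))$ within distance $L\cdot d_N\bigl(f\beta(t_{i-1}),f\beta(t_i)\bigr)$ of $\beta(t_{i-1})$. A short argument using openness, discreteness and uniqueness of continuous lifts identifies that preimage as $\beta(t_i)$ itself. Summing over $i$ and taking the supremum over partitions yields $\ell(\beta)\le L'\,\ell(f\circ\beta)$, where $L'$ absorbs the length-to-distance comparison in the chosen charts.

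The main obstacle is precisely the uniqueness step: the LQ condition only guarantees \emph{existence} of some nearby preimage, and one must rule out that $\beta$ has already strayed to a different branch above $f\beta(t_i)$. This is where discreteness is essential, since it makes the continuous lift locally unique on sufficiently small scales; without it the constant $L$ is not enough to control $\ell(\beta)$ by $\ell(f\circ\beta)$ at all. The dependence of $L'$ on the data enters only through the length-versus-distance comparison constant in the chart balls produced by bounded geometry. Under strongly bounded geometry these two quantities agree at the relevant scale, so the comparison constant is $1$ and the argument collapses to $L'=L$, giving the final statement of the theorem.
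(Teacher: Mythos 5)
Your forward direction coincides with the paper's, which simply cites \cite[Theorem 4.20]{MartioVaisala} and \cite[Remark 3.16(c)]{HeinonenRickman}. The converse is where the problem lies, and the step you yourself flag as ``the main obstacle'' is a genuine gap that your sketch does not close. The lower inclusion in \eqref{eq:LQ-def}, applied at $x=\beta(t_{i-1})$ with $r$ slightly larger than $L\,d_N\bigl(f\beta(t_{i-1}),f\beta(t_i)\bigr)$, only produces \emph{some} point $w\in B_M(\beta(t_{i-1}),r)$ with $f(w)=f(\beta(t_i))$; to deduce $d\bigl(\beta(t_{i-1}),\beta(t_i)\bigr)\le L\,d_N\bigl(f\beta(t_{i-1}),f\beta(t_i)\bigr)$ you must know that $\beta(t_i)$ is the \emph{only} preimage of its image in that ball. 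There is no continuous lift in sight whose uniqueness you could invoke: $w$ is merely produced by surjectivity of $f$ onto a small ball. Near the branch set the identification genuinely fails, because distinct preimages of a single point can be arbitrarily close together (for the planar winding map one has $f(x)=f(-x)$ with $d(x,-x)=2|x|\to 0$ as $x\to B_f$), so no choice of partition mesh, however fine, guarantees that the LQ ball isolates $\beta(t_i)$; and for $n\ge 3$ a path may even lie inside the branch set. Discreteness gives local finiteness of fibers but no quantitative, uniform injectivity scale along $\beta$, which is exactly what a sup-over-partitions argument requires. (A secondary point: since manifolds of bounded geometry are geodesic, length and distance agree infinitesimally, so the ``length-to-distance comparison in charts'' you blame for the loss from $L$ to $L'$ is trivial; were your argument correct it would yield $L'=L$ unconditionally.)

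The paper avoids all of this by an analytic route. The bilipschitz charts of bounded geometry conjugate $f$ locally to a discrete $(C^2L)$-LQ map between Euclidean domains (this conjugation is the actual source of $L'=C^4L$, collapsing to $L$ as $C\to1$ under strongly bounded geometry). In the Euclidean setting the coarea formula shows that the branch set of a discrete LQ map has measure zero (Lemma \ref{lemma:LQBranchZeroMeasure}); off $B_f$ the map is locally $L$-bilipschitz, so almost everywhere $L^{-1}\le\min_{|\mathbf{v}|\le1}|Df(x)\mathbf{v}|\le\|Df(x)\|\le L$ and $J_f$ has a.e.\ constant sign; the path-length inequality \eqref{eq:BLD} then follows from the distributional-derivative argument in the proof of \cite[Theorem 2.16]{MartioVaisala}. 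A purely metric argument in the spirit of yours would need uniform multiplicity bounds and diameter control on components of preimages of balls, i.e.\ something like Theorem \ref{thm:Martina} --- which the paper derives only \emph{after} this characterization.
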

The definitions and basic theory of BLD- and LQ-mappings
are not limited to the context of geodesic manifolds.
However, the methods we use in Theorem \ref{thm:Characterization}
rely on the locally Euclidean topology of manifolds and 
therefore cannot be directly translated to a more general setting.
We refer to \cite{HeinonenRickman} for a detailed discussion
on branched covers and BLD-mappings on generalized
manifolds.

It is
an open problem whether
$L$-Lipschitz quotient 
-mappings $\R^n \to \R^n$ are discrete (and hence BLD) mappings
for $n \geq 3$.
For positive result in dimension $n=2$ and for a
detailed discussion
see \cite{BJLPS}.

The first application of the characterization 
of Theorem \ref{thm:Characterization}
is a limit theorem for BLD-mappings.
It is a result of Martio and V\"ais\"al\"a
\cite[Theorem 4.7.]{MartioVaisala}
that locally 
uniform limits of $L$-BLD-mappings
between Euclidean domains are $L$-BLD. 
This result was generalized by Heinonen and Keith
in \cite[Lemma 6.2.]{HeinonenKeith}.
They showed
in a more general setting of complete generalized 
manifolds with some bounds on their geometry that
locally uniform limits of $L$-BLD-mappings
are $K$-BLD with some $K \geq L$ depending only on the
data. 
Martio and V\"ais\"al\"a proved the limit theorem 
by characterizing BLD-mappings as quasiregular
mappings with their Jacobian bounded from below and
Heinonen and Keith proved
the result by characterizing
BLD-mappings as locally regular mappings.
We prove the limit theorem by
using the previous characterization
theorem of BLD-mappings as LQ-mappings.

To further the analogy between these approaches
to the limit theorem,
we note that Heinonen and Keith show the limits
of regular maps to be quantitatively regular
and we show that the limits of $L$-LQ-mappings
are still $L$-LQ. In both arguments the loss of constants
happens when showing that the limit map, be it
locally regular or a discrete LQ, is a BLD-mappings.
To be more precise, we prove the following LQ limit theorem.
For the definition of $R$-uniformly $k$-to-one
mapping see the next section.
\begin{theorem}\label{theorem:LQlimit}
  Let $M$ and $N$ be $n$-manifolds with bounded geometry and
  let $(f_j)$ be a sequence of $R$-uniform $k$-to-one
  $L$-LQ-mappings $M \to N$ 
  converging locally uniformly to a continuous mapping
  $f \colon M \to N$.
  Then the limit map is a
  $R$-uniform $k$-to-one  
  $L$-LQ-mapping.
\end{theorem}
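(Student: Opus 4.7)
The plan is to verify the three parts of the conclusion separately: the outer inclusion $fB_M(x,r) \subset B_N(f(x),Lr)$, the inner inclusion $B_N(f(x),L\inv r) \subset fB_M(x,r)$, and the $R$-uniform $k$-to-one bound. Throughout I use that manifolds of bounded geometry are proper, so closed balls are compact and locally uniform convergence on $M$ translates to uniform convergence on every closed ball.

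The outer inclusion is soft. The outer LQ-inclusion for $f_j$ directly gives $d_N(f_j(x),f_j(y)) \leq L\, d_M(x,y)$ (letting $r \downarrow d_M(x,y)$), so each $f_j$ is $L$-Lipschitz, and hence so is the locally uniform limit $f$. This yields $fB_M(x,r) \subset B_N(f(x),Lr)$ at once.

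For the inner inclusion I would use compactness. Fix $x \in M$, $r > 0$ and $y \in B_N(f(x), L\inv r)$, and choose $r' < r$ with $d_N(f(x), y) < L\inv r'$. Uniform convergence on $\overline{B_M(x,r')}$ gives $d_N(f_j(x), y) < L\inv r'$ for all large $j$, so the LQ-property of $f_j$ supplies $z_j \in B_M(x,r')$ with $f_j(z_j) = y$. Properness of $M$ allows one to pass to a subsequence $z_j \to z \in \overline{B_M(x,r')} \subset B_M(x,r)$, and uniform convergence upgrades $f_j(z_j) \to f(z)$, so $f(z) = y$.

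For the $R$-uniform $k$-to-one bound I would argue by contradiction. If $f$ fails the bound, then some $y \in N$ has $k+1$ distinct preimages $z_1,\dots,z_{k+1}$ inside some ball $B_M(x_0,R)$. Choose $\eps > 0$ so small that the balls $B_M(z_i,\eps)$ are pairwise disjoint and contained in $B_M(x_0,R)$. For $j$ large, uniform convergence forces $d_N(f_j(z_i),y) < L\inv \eps$ for every $i$, and the LQ-property of $f_j$ then produces a preimage of $y$ inside each $B_M(z_i,\eps)$, giving $k+1$ distinct preimages of $y$ under $f_j$ inside $B_M(x_0,R)$ and contradicting the hypothesis on $f_j$. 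The only genuinely delicate point in the whole argument is the inner inclusion, where properness of the manifolds is essential to extract a subsequential limit of preimages; the remaining two parts reduce to soft combinations of uniform convergence with the LQ-property of each $f_j$.
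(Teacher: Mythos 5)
Your argument is correct and follows essentially the same route as the paper: the two LQ inclusions are precisely the compactness argument behind the cited Lemma \ref{lemma:LQLimitIsLQ}, and the multiplicity bound is the paper's own contradiction argument (pushing each $z_i$ to a nearby $f_j$-preimage of $y$ via the inner LQ inclusion on the small disjoint balls). The one point your write-up (and, to be fair, the paper's) leaves implicit is that to contradict $R$-uniform $k$-to-one-ness of $f_j$ --- rather than $2R$-uniformity --- the counting ball must be centred at one of the newly produced preimages near $x_0$, which forces $\eps$ to be taken smaller than $R - d(x_0,z_i)$ as in the paper's choice of $\eps$.
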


Combining the previous theorems we obtain as a 
corollary version
of a limit theorem of Martio and V\"ais\"al\"a for BLD-mappings
between manifolds with bounded geometry.
\begin{corollary}\label{coro:BLDLimit}
  Let $M$ and $N$ be $n$-manifolds with bounded geometry and
  let $(f_j)$ be a sequence of $L$-BLD-mappings $M \to N$
  converging locally uniformly to a continuous mapping
  $f \colon M \to N$.
  Then the limit map is an $L'$-BLD-mapping. 
  Furthermore, if the manifolds have strongly bounded geometry,
  $L' = L$.
\end{corollary}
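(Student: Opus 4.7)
My plan is to chain together the two preceding theorems: first convert BLD into discrete LQ via Theorem \ref{thm:Characterization}, then apply the LQ limit Theorem \ref{theorem:LQlimit}, and finally convert back to BLD.

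Concretely, by Theorem \ref{thm:Characterization} each $f_j$ is a discrete $L$-LQ-mapping (with $L'=L$ in the strongly bounded geometry case, this is already sharp). To feed the sequence into Theorem \ref{theorem:LQlimit} I need to verify that the family $(f_j)$ is $R$-uniformly $k$-to-one for constants $R,k$ independent of $j$. This is the step that really needs the bounded geometry hypothesis: I would fix a scale $R>0$ dictated by the bounded geometry of $M$ and $N$ (so that balls of radius $R$ in $N$ are topologically tame and balls of radius $2LR$ in $M$ have controlled geometry), and then bound the multiplicity of $f_j$ on $R$-balls by a constant $k=k(L,n,\text{data})$ using the $L$-Lipschitz property and a volume/packing argument coming from bounded geometry. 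This is essentially the standard fact that BLD-mappings have uniformly bounded local multiplicity, but executed uniformly in $j$; it is the only nontrivial input here.

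Once the $R$-uniform $k$-to-one property is established uniformly in $j$, Theorem \ref{theorem:LQlimit} applies directly and yields that the locally uniform limit $f$ is itself $R$-uniformly $k$-to-one and $L$-LQ. In particular $f$ is discrete (its point-preimages have cardinality at most $k$ on $R$-balls, so they are locally finite), so $f$ is a discrete $L$-LQ-mapping between manifolds of bounded geometry.

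To finish, I invoke Theorem \ref{thm:Characterization} in the reverse direction: a discrete $L$-LQ-mapping between manifolds of bounded geometry is $L'$-BLD, where $L'$ depends only on the data, and $L'=L$ when the geometry is strongly bounded. This gives both statements of the corollary simultaneously. The only real obstacle is the uniform multiplicity bound discussed above; the rest is a mechanical application of the two theorems.
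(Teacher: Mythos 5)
Your proposal is correct and follows essentially the same route as the paper: convert each $f_j$ to a discrete $L$-LQ-mapping via Theorem \ref{thm:Characterization}, obtain the uniform multiplicity bound (the paper's Lemma \ref{lemma:GeometricIndexBound}, whose constants depend only on the data and hence are automatically uniform in $j$, exactly as your packing/volume argument would give), apply Theorem \ref{theorem:LQlimit}, and convert back. The one step you flag as ``the only nontrivial input'' is precisely what the paper delegates to Lemma \ref{lemma:GeometricIndexBound}, so nothing is missing.
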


Using the ideas of the proof of Theorem 
\ref{theorem:LQlimit}
the local bounds of the multiplicity can be used to show that
BLD-mappings share some properties of covering maps
which do not hold for
\emph{branched covers}, i.e.\ continuous, open and discrete 
mappings, or even quasiregular mappings. 
The following theorem shows that not only are
BLD-mappings branched covers but they
have a property similar to a covering property.
\begin{theorem}\label{thm:Martina}
  Let $M$ and $N$ be
  $n$-manifolds of bounded geometry
  and let $f \colon M \to N$ be
  an $L$-BLD-mapping. Then there 
  exists a radius 
  $R_{\text{inj}} > 0$ and constants $D > 0$ and
  $k \geq 1$
  depending only on $M$, $N$ and $L$
  such that for each $y \in N$
  and any $0 < r \leq R_\text{inj}$
  the pre-image $f \inv B_N(y,r)$ consists
  of pair-wise disjoint domains $U$ with $\diam (U) \leq D r $
  for which the restriction
  $f|_U \colon U \to B_N(y,r)$ is a surjective 
  $k$-to-one BLD-mapping.
\end{theorem}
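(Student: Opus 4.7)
The plan is to realize each component of $f\inv B_N(y,r)$ as a uniform-scale ``normal neighborhood'' for the BLD map $f$, using the Lipschitz-quotient side of Theorem~\ref{thm:Characterization} together with path-lifting, and extracting the constants $R_{\text{inj}}$, $D$ and $k$ from the bounded geometry of $M$ and $N$.

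First I would fix $R_{\text{inj}}>0$ using the injectivity-radius bound from bounded geometry, so that for each $y\in N$ the ball $B_N(y,4R_{\text{inj}})$ is bi-Lipschitz to a Euclidean ball and, correspondingly, every ball of radius $4LR_{\text{inj}}$ around any point of $M$ lies in a comparable chart. Fix $y\in N$, $r\leq R_{\text{inj}}$, and a component $U$ of $f\inv B_N(y,r)$ with a basepoint $x_0\in U$. Path-lifting for BLD maps between complete manifolds of bounded geometry (see e.g.\ \cite{HeinonenRickman}) lets me lift any path $\gamma$ in $B_N(y,r)$ starting at $f(x_0)$ to a path $\tilde\gamma$ in $M$ starting at $x_0$ with $\ell(\tilde\gamma)\leq L\ell(\gamma)$; since the lift remains in $f\inv B_N(y,r)$ and is connected, it stays in $U$, so $f(U)=B_N(y,r)$ and pairwise disjointness of the components is automatic.

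For the diameter estimate, the same path-lifting argument applied between $f(x_0)$ and any $y'\in B_N(y,r)$, using paths in $B_N(y,r)$ of length at most $2r$, places at least one preimage of $y'$ in $U\cap B_M(x_0,2Lr)$. Chaining these short lifts through the finitely many preimages of $y$ inside $U$ bounds $\diam(U)$ by a multiple of $Lr$ times the number $k$ of such preimages. The uniform bound on $k$ I would obtain by a volume/area-formula argument: the BLD inequality forces $L^{-n}\leq|J_f|\leq L^{n}$ almost everywhere, so integrating over the chart ball $B_M(x_0,2Lr)$ and invoking the bounded-geometry comparabilities $\mathrm{vol}(B_N(y,r))\asymp r^n$ and $\mathrm{vol}(B_M(x_0,2Lr))\asymp (Lr)^n$ gives $k\leq C(L,n,M,N)$. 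Once $D$ and $k$ are fixed, the restriction $f|_U\colon U\to B_N(y,r)$ is automatically $L$-BLD by \eqref{eq:BLD} applied to paths contained in $U$.

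The main obstacle I expect is the interdependence between $D$ and $k$: the diameter bound needs the number of preimages of $y$ inside $U$ to be controlled, while one naturally bounds that count by a volume estimate inside a ball of radius $Dr$ around $x_0$. I would break this circularity by first bounding the local index $i(x,f)$ uniformly from the BLD condition and bounded geometry (using that $f$ becomes a quasiregular map with uniformly controlled distortion in the chosen charts), and then running the volume count inside the fixed-size chart ball $B_M(x_0,2Lr)$, which produces $k$ first and the explicit value of $D$ afterwards.
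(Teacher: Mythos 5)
Your plan correctly identifies the crux --- the interdependence between the diameter bound $D$ and the multiplicity bound $k$ --- but the proposed way of breaking that circularity does not work. You suggest bounding the local index uniformly and then ``running the volume count inside the fixed-size chart ball $B_M(x_0,2Lr)$.'' That volume count only controls $\#\bigl(f\inv\{y\}\cap B_M(x_0,2Lr)\bigr)$, whereas your chaining argument for $\diam(U)$ needs a bound on $\#\bigl(f\inv\{y\}\cap U\bigr)$ for the \emph{whole} component $U$. A priori nothing prevents $U$ from being a long chain of preimages of $y$ reaching far outside $B_M(x_0,2Lr)$, so the count inside a ball of fixed radius does not capture all of them, and the circularity remains. (Your other ingredients are fine: lifting a geodesic from $f(x)$ to $y$ does place every $x\in U$ within $Lr$ of $f\inv\{y\}\cap U$, lifts are total by the length bound plus properness of $M$, and this also gives surjectivity of $f|_U$ without precompactness --- a genuinely different and valid route to that part, where the paper instead uses an elementary lemma on precompact components of $f\inv V$.)

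The paper resolves the circularity by arguing directly against the \emph{$R$-uniformly $k$-to-one} property (Lemma \ref{lemma:GeometricIndexBound}), with $R$ and $k$ fixed in advance independently of $U$: set $D=2L(k+1)$ and $R_{\text{inj}}=R/D$, and suppose $\diam(U)\geq Dr$. Connectedness of $U$ then yields $k+1$ points $z_0,\dots,z_k\in U\cap B_M(x_0,Dr)$ with pairwise distances at least $2Lr$; the lower LQ inclusion gives $y_0\in fB_M(z_i,Lr)$ for each $i$, so the $k+1$ pairwise disjoint balls $B_M(z_i,Lr)$ each contain a preimage of $y_0$, all inside $B_M(x_0,Dr)\subset B_M(x_0,R)$. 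This contradicts the uniform $k$-to-one bound and yields $\diam(U)<Dr$ with no prior control on $\#\bigl(f\inv\{y\}\cap U\bigr)$ needed. Your chaining argument can be repaired along the same lines (if $f\inv\{y\}\cap U$ had more than $k$ elements, a connected subchain of $k+1$ of them would lie in a single ball of radius $R$, again contradicting the uniform bound), but the repair requires exactly this global-to-local counting step, not the volume estimate in a fixed chart ball that you propose.
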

This theorem especially implies that BLD-mappings
between manifolds of bounded geometry are
\emph{complete spreads} in the sense of \cite{Fox},
and branched coverings in the sense of 
\cite{Fox}
when the image of the branch set is closed;
see also \cite{Montesinos}.

Theorem \ref{thm:Martina} shows that
BLD-mappings between manifolds of bounded geometry
do not have asymptotic values.
A mapping $f \colon X \to Y$ between proper
metric spaces has an \emph{asymptotic value} 
at $y_0 \in Y$ if there exists a path 
$\beta \colon [0,1) \to X$ such that
$\lim_{t \to 1} f(\beta(t)) 
= y_0,
$
but the image $|\beta|$ of the path $\beta$ is not
contained in any compact set of $X$. 
\begin{corollary}\label{coro:BLDHasNoAsymptoticValues}
  Let $M$ and $N$ be
  $n$-manifolds of bounded geometry
  and let $f \colon M \to N$ be
  a BLD-mapping. Then $f$ 
  has no asymptotic values.
\end{corollary}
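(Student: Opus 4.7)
The plan is to argue by contradiction, using Theorem~\ref{thm:Martina} to trap the tail of any hypothetical witnessing path in a single bounded sheet of the pre-image. Suppose $f$ admits an asymptotic value at some $y_0 \in N$, realized by a path $\beta \colon [0,1) \to M$ with $\lim_{t \to 1} f(\beta(t)) = y_0$ while $|\beta|$ fails to be contained in any compact subset of $M$. Let $R_{\text{inj}}, D, k$ be the constants furnished by Theorem~\ref{thm:Martina} for $f$, and fix a radius $0 < r \leq R_{\text{inj}}$; then the preimage $f\inv(B_N(y_0,r))$ decomposes as a pairwise disjoint union of domains $\{U_\alpha\}$, each with $\diam(U_\alpha) \leq Dr$.

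The next step is to exploit connectedness. Since $f(\beta(t)) \to y_0$, I would choose $t_0 \in [0,1)$ so that $\beta([t_0,1)) \subset f\inv(B_N(y_0,r))$. The continuous image $\beta([t_0,1))$ is connected, while the $U_\alpha$ are pairwise disjoint open subsets of $M$, so the tail must lie entirely in a single sheet $U_{\alpha_0}$ and consequently has diameter at most $Dr$. Together with the compact set $\beta([0,t_0])$, this forces $|\beta|$ to be bounded in $M$.

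To finish, I would appeal to the fact that a manifold of bounded geometry is complete with positive injectivity radius, hence proper, so bounded sets have compact closure. Thus $|\beta|$ is contained in a compact subset of $M$, contradicting the defining property of an asymptotic value.

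The only real obstacle is the correct invocation of Theorem~\ref{thm:Martina}; all the geometric content sits inside that statement, and once the bounded-diameter disjoint decomposition is in hand the remainder is a short connectedness argument combined with properness of $M$. No further analytic or metric estimates should be required.
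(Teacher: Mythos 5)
Your proposal is correct and is precisely the argument the paper compresses into its one-line proof ("lifts of sufficiently short paths are bounded"): the tail of the path is connected, hence trapped in a single component of $f\inv B_N(y_0,r)$ of diameter at most $Dr$, and properness of $M$ then gives the contradiction. No issues.
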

In this sense BLD-mappings differ greatly 
from quasiregular mappings.
For example Drasin constructs in 
\cite{Drasin97} an 
extremal example of a quasiregular mapping
$\R^3 \to \R^3$ for which every point in the range
of the map is an asymptotic
value.
For the definition and basic properties of quasiregular
mappings we refer to \cite{Rickman}.

Furthermore, from Corollary
\ref{coro:BLDHasNoAsymptoticValues} a simple
topological argument yields a Zorich
type theorem for BLD
mappings between manifolds of bounded geometry.
Note that the Zorich
type theorems for quasiregular mappings
hold only for conformally parabolic spaces,
see for example \cite{Zorich} and
\cite[Corollary 3.8]{Rickman};
for mappings of finite distortion see e.g.\
\cite{HolopainenPankka}
and 
\cite{KoskelaOnninenRajala}.
\begin{corollary}\label{coro:Zorich}
  Let $M$ and $N$ be $n$-manifolds of bounded geometry
  and let $ f \colon M \to N$ be a BLD-mapping.
  If $f$ is a local homeomorphism, then it is a covering map.
\end{corollary}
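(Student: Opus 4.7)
The plan is to deduce the Zorich-type statement from Corollary \ref{coro:BLDHasNoAsymptoticValues} through a classical path-lifting argument: for a local homeomorphism between nice spaces, the absence of asymptotic values forces a global path lifting property, which together with local simple connectedness of $N$ upgrades $f$ to a covering map.

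First I would establish path lifting. Given any path $\gamma \colon [0,1] \to N$ and any $x_0 \in f^{-1}(\gamma(0))$, the local homeomorphism property together with uniqueness of lifts produces, via a maximality argument, a lift $\tilde\gamma \colon [0, t_0) \to M$ with $t_0 \in (0,1]$ maximal. If the image $|\tilde\gamma|$ is contained in some compact subset of $M$, then a convergent subsequence $\tilde\gamma(t_n) \to x_1$ with $t_n \to t_0^-$ exists; invoking the local homeomorphism near $x_1$ together with uniqueness of lifts extends $\tilde\gamma$ continuously to $[0, t_0]$, and further past $t_0$ if $t_0 < 1$, contradicting maximality. Otherwise $|\tilde\gamma|$ fails to lie in any compact subset of $M$ while $f(\tilde\gamma(t)) \to \gamma(t_0)$ as $t \to t_0^-$; after a harmless reparametrization of $\tilde\gamma$ onto $[0,1)$, this path exhibits $\gamma(t_0)$ as an asymptotic value of $f$, contradicting Corollary \ref{coro:BLDHasNoAsymptoticValues}. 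Therefore $t_0 = 1$ and $\gamma$ lifts to all of $[0,1]$.

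With path lifting established, the covering property follows by a standard monodromy argument. For each $y_0 \in N$, bounded geometry of $N$ provides a radius $r > 0$ for which $V := B_N(y_0, r)$ is open and simply connected. For each $x \in f^{-1}(y_0)$, define $U_x \subset f^{-1}(V)$ to be the set of endpoints of lifts starting at $x$ of paths in $V$ originating at $y_0$. Simple connectedness of $V$ combined with unique path lifting for local homeomorphisms makes $U_x$ well defined and $f|_{U_x} \colon U_x \to V$ a homeomorphism; lifting paths in $V$ backwards from $f(x')$ to $y_0$ shows $f^{-1}(V) = \bigcup_{x \in f^{-1}(y_0)} U_x$, and uniqueness of lifts forces this union to be disjoint. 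Hence $V$ is evenly covered and $f$ is a covering map.

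The principal subtlety lies in the compactness dichotomy of the first step: one must carefully rule out that the partial lift escapes every compact set without thereby producing an asymptotic value, and this is precisely the place where Corollary \ref{coro:BLDHasNoAsymptoticValues} is consumed. Once this dichotomy is settled, the remainder of the argument is a routine application of the monodromy theorem for local homeomorphisms into a space admitting small simply connected balls.
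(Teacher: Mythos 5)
Your argument is correct, and it reaches the conclusion by a route that differs from the paper's in a meaningful way. The paper's proof is a two-line reduction: it lifts $f$ to a map $\tilde f \colon \tilde M \to \tilde N$ between universal covers, notes that these are again manifolds of bounded geometry so that Corollary \ref{coro:BLDHasNoAsymptoticValues} applies to $\tilde f$, and then invokes the classical fact that a local homeomorphism without asymptotic values into a simply connected space is invertible, finally descending to conclude that $f$ is a covering map. You instead apply Corollary \ref{coro:BLDHasNoAsymptoticValues} directly to $f$, extract the full path-lifting property from the compactness dichotomy (your treatment of the dichotomy is the right one: the escaping case is exactly the definition of an asymptotic value used in the paper, and the compact case closes via unique lifting once a subsequential limit lands in a chart where $f$ is injective), and then run the monodromy argument over simply connected balls in $N$ to exhibit evenly covered neighbourhoods. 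What your approach buys is self-containedness: you never need to verify that $\tilde M$ and $\tilde N$ carry lifted metrics of bounded geometry, nor that the lift $\tilde f$ is again BLD and a local homeomorphism --- facts the paper asserts without proof. What the paper's approach buys is brevity, since in the simply connected target the covering property collapses to invertibility and no monodromy bookkeeping is needed. One cosmetic point: simple connectedness of a small ball $B_N(y_0,r)$ is cleanest at the chart radius $r=R$ itself, where the ball is bilipschitz to a Euclidean ball; for smaller $r$ it is not immediate from the definition of bounded geometry, though local simple connectedness of the manifold $N$ is all you actually need.
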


\bigskip
\noindent
\textbf{Acknowledgements.} The author would like to thank
his advisor Pekka Pankka for introducing him to the world
of BLD geometry. The author is also indebted to his advisor
and Martina Aaltonen for the many discussions concerning,
among a plethora of other things, the topics of this note.

We thank the referee for carefully reading the paper
and for helpful suggestions and remarks.

\section{Preliminary notions}

We say that a metric manifold $M$ which is
a complete length space has
\emph{bounded geometry} if 
there exists constants $C \geq 1$ and $R > 0$ 
such that for every point $x \in M$
there exists a $C$-bilipschitz mapping
$f \colon B_M(x,R) \to B_{\R^n}(0,R)$.
We say that $M$
has \emph{strongly bounded geometry}, if for any
$C \geq 1$
such a radius $R > 0$ can be found.
Here and in what follows we denote by $B_X(x,r)$
an (open) ball about $x \in X$ of radius $r$
in a metric space $(X,d)$.

Manifolds of bounded geometry 
are analogous to
complete Riemannian manifolds with
bounded Ricci curvature; indeed
complete Riemannian manifolds with bounded Ricci-curvature
have strongly bounded geometry, see
e.g.\ \cite[2.2]{Lelong}.
Note that since manifolds are locally Euclidean, they
are locally compact, and hence by Hopf-Rinow theorem
manifolds with bounded geometry are proper geodesic metric spaces
as locally compact and complete length spaces.

A mapping will be called
or \emph{$k$-to-one}, if any point in the
image has at most $k$ pre-images.
A mapping is said to be \emph{finite-to-one}
if it is $k$-to-one for some $k \in \N$.
For a discrete
map $f \colon X \to Y$ between metric spaces the quantity
\begin{align*}
  N(f(x),f,B_X(x,r)) := 
  \# \left( f \inv \{ f(x) \} \cap B_X(x,r) \right)\!,
\end{align*}
which we call the 
\emph{$r$-local multiplicity of $f$ around $x$},
is finite for all $x$ and $r$.
If this holds uniformly locally 
for a mapping $f \colon X \to Y$ between metric spaces,
i.e.\
\begin{align*}
  \# \left( f \inv \{ f(x) \} \cap B_X(x,R) \right) \leq k
\end{align*}
holds for all $x \in X$,
we say that the mapping is \emph{$R$-uniformly $k$-to-one}.

The set in which 
a branched cover $f$ fails to be a local homeomorphism
is called \emph{the branch set} and it is denoted 
by $B_f$. The branch set of a branched cover between
(generalized) manifolds is always small in a topological sense.
More precisely, let $ f \colon M \to N $ be a branched cover
between $n$-manifolds. Then $B_f$ has topological
dimension at most $n-2$; see \cite{Vaisala}.

The following lemma 
that implies that of BLD-mappings
are uniformly finite-to-one
is contained in \cite[Theorem 6.8]{HeinonenRickman}.
\begin{lemma}\label{lemma:GeometricIndexBound}
  Let $M$ and $N$ be 
  manifolds of bounded geometry and let 
  $f \colon M \to N$ be an $L$-BLD-mapping.
  Then $f$ is $R$-uniformly $k$-to-one
  with $R$ and $k$
  depending only on the data.
\end{lemma}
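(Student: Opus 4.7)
The plan is to bootstrap from the two-sided metric control in the BLD definition (equivalently, $L$-Lipschitz plus $L$-LQ in length spaces) to a volume / local-degree comparison, with the bi-Lipschitz charts from bounded geometry supplying a uniform scale on which everything is comparable to Euclidean data. I would first fix a radius $R>0$, depending only on the bounded-geometry constants of $M$ and $N$ and on $L$, small enough that for every $x\in M$ the ball $B_M(x,3R)$ and the ball $B_N(f(x),3LR)$ are each bi-Lipschitz equivalent (with uniform constant) to a Euclidean ball. At this scale the Hausdorff $n$-measure is comparable to Lebesgue measure with a uniform constant: $\mathcal{H}^n(B_M(x,r))\asymp r^n$ and $\mathcal{H}^n(B_N(y,r))\asymp r^n$ for $r\le 2R$, $2LR$ respectively.

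Fixing $x\in M$, the set $\{x_1,\dots,x_m\}=f\inv\{f(x)\}\cap B_M(x,R)$ is finite by discreteness of $f$ and pre-compactness of the ball; the goal is to bound $m$ by a constant depending only on the data. Around each $x_i$ I would construct a normal neighbourhood $U_i$ whose image is the fixed ball $V:=B_N(f(x),R/L)$: this is produced by taking the component of $f\inv(V)$ at $x_i$, together with the BLD lower length inequality $\ell(\beta)\le L\,\ell(f\circ\beta)$, which forces any such component to satisfy $U_i\subset B_M(x_i,R)\subset B_M(x,2R)$. The restriction $f|_{U_i}\colon U_i\to V$ is then a proper branched covering of degree equal to the local index $i(x_i,f)$.

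The area formula applied to $f|_{U_i}$, using that $f$ is $L$-Lipschitz (hence $J_f\le L^n$ a.e.), gives
\[
i(x_i,f)\,\mathcal{H}^n(V)\;\le\;\int_{U_i} J_f\;\le\;L^n\mathcal{H}^n(U_i),
\]
and together with the uniform volume comparisons above this yields an absolute bound $i(x_i,f)\le C=C(L,\text{data})$ on every local index. To convert this into a bound on $m$ I would then aggregate: pick a ball $V'=B_N(f(x),\eta)\subset V$ with $\eta$ so small that every regular value $y\in V'$ has a distinct pre-image in each $U_i$ (obtained from the branched-cover structure of $f|_{U_i}$); summing gives $\sum_i i(x_i,f)\le N(y,f,B_M(x,2R))$ for a.e.\ such $y$. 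Integrating this inequality over $V'$ and applying the area formula on $B_M(x,2R)$ bounds $\sum_i i(x_i,f)$, and hence $m\le\sum_i i(x_i,f)$, by a constant depending only on $L$ and on the bounded-geometry constants.

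The main obstacle is precisely the passage from the easy per-point local-index bound to a uniform bound on the sum $\sum_i i(x_i,f)$: a naive volume argument loses control because pre-images $x_i$ can cluster arbitrarily closely, so one cannot choose a common disjointness scale depending only on the data. The device that makes the plan go through is to renounce disjointness of the $U_i$ and replace it by the topological identity $\#(f\inv\{y\}\cap B_M(x,2R))\ge\sum_i i(x_i,f)$ for generic $y$ near $f(x)$ provided by the branched-covering structure, so that the single Lipschitz volume estimate on $B_M(x,2R)$ does all the work.
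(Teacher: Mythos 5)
Your overall strategy---bound each local index by an area-formula volume comparison and then aggregate the indices into a single multiplicity count on a slightly larger ball---is essentially how the inequality that the paper simply quotes from \cite[Theorem 6.8]{HeinonenRickman} is proved, but there is a genuine gap at the step on which everything else rests: the claim that the $x_i$-component $U_i$ of $f\inv (V)$, with $V=B_N(f(x),R/L)$, satisfies $U_i\subset B_M(x_i,R)$ ``by the BLD lower length inequality''. For a path $\beta$ in $U_i$ that inequality gives $\ell(\beta)\le L\,\ell(f\circ\beta)$, but $\ell(f\circ\beta)$ is not controlled by the radius of $V$: the image path may wind inside the small ball $V$ with arbitrarily large length, so no bound on $\diam(U_i)$ follows from connectedness of $U_i$ alone. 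Uniform boundedness of components of preimages of balls is precisely the content of Theorem \ref{thm:Martina}, which the paper proves \emph{after} and \emph{using} this lemma, so it cannot be assumed here without circularity. Without the containment $U_i\subset B_M(x,2R)$ both of your key estimates collapse: the bound $i(x_i,f)\,\mathcal{H}^n(V)\le L^n\mathcal{H}^n(U_i)$ is vacuous if $\mathcal{H}^n(U_i)$ is uncontrolled, and the inequality $\sum_i i(x_i,f)\le N(y,f,B_M(x,2R))$ is unavailable because the relevant preimages of $y$ need not lie in $B_M(x,2R)$. Repairing this requires a substantive extra ingredient, for instance Rickman-style path lifting for discrete open maps: lift the geodesic from $y$ to $f(x)$ starting at $x_i$, use $\ell(\mathrm{lift})\le L\cdot R/L=R$ together with properness of $M$ to show the maximal lift is total and stays in $B_M(x,2R)$, and use essentially separate lifts (or degree theory on genuinely precompact normal domains) to obtain $m$ \emph{distinct} preimages of almost every $y\in V$.

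A secondary issue: if, to guarantee distinct preimages, you shrink to $V'=B_N(f(x),\eta)$ with $\eta$ depending on the configuration of the points $x_i$ (as choosing pairwise disjoint normal neighbourhoods in general forces, since the $x_i$ may cluster), then dividing by $\mathcal{H}^n(V')$ at the end destroys the uniformity of the resulting bound on $m$. The count must be run over almost every $y$ in the full ball $V$, which the degree of $f$ on precompact components does permit---but again only once precompactness and the containment in $B_M(x,2R)$ have been established. For comparison, the paper's own proof avoids all of this by citing the multiplicity--volume inequality of Heinonen and Rickman verbatim and only supplying the bounded-geometry computation that converts the volume ratio into the explicit constant $2^nL^{2n}K^{2n}$.
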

\begin{proof}
  By \cite[Theorem 6.8.]{HeinonenRickman},
  we have that
  \begin{align*}
    N(f(x), f , B_M(x,r))
    \leq (L c_M)^n
    \frac{\mathcal{H}^n ( B_M(x,\lambda r))}{\mathcal{H}^n(B_N(f(x),(\lambda -1)r/L c_N))}
  \end{align*}
  for $x \in M$, $r > 0$ and $\lambda > 1$,
  where $\mathcal{H}^n$ is the Hausdorff 
  $n$-measure, and the constants
  $c_M$ and $c_N$ are the quasi-convexity 
  constants of $M$ and $N$, respectively.
  We fix $\lambda = 2$ and note that 
  $c_M = c_N = 1$ in geodesic
  manifolds. Also, since $M$ and $N$ are
  manifolds of bounded geometry, 
  there exists a radius $R > 0$ and a constant $K \geq 1$
  such that when $r$ is small enough
  balls $B_M(x, 2 r )$ and $B_N(f(x),r/L)$ are $K$-bilipschitz
  equivalent to Euclidean balls 
  $B_{\R^n}(0,2r)$ and $B_{\R^n}(0,r/L)$, respectively.
  Thus, for small $r$, we have
  \begin{align*}
    N(f(x), f , B_M(x,r))
    \leq L^n K^{2n}\frac{\mathcal{H}^n ( B_{\R^n}(0, 2 r))}{\mathcal{H}^n(B_{\R^n}(0,r/L))}
    = 2^{n} L^{2n} K^{2n}.
  \end{align*}
\end{proof}

It is straightforward to see that
locally uniform limits of $L$-LQ-mappings 
are $L$-LQ; see e.g.\
\cite[Lemma 3.1.]{LeDonnePankka}.
We record this observation as a lemma.
\begin{lemma}\label{lemma:LQLimitIsLQ}
  Let $X$ and $Y$ be proper metric spaces
  and let $(f_j)$ be a sequence of
  $L$-LQ mappings converging locally
  uniformly to a continuous mapping
  $f \colon X \to Y$. Then $f$ is $L$-LQ.
\end{lemma}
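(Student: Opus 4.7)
The plan is to verify the two inclusions in \eqref{eq:LQ-def} for the limit $f$ directly, by pulling each of them back to the approximating maps $f_j$ and passing to the limit, using properness of $X$ to extract convergent subsequences of pre-images. Throughout I would fix $x \in X$ and $r > 0$.

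For the outer inclusion $fB_X(x,r) \subset B_Y(f(x), Lr)$ the argument is short: given $z \in B_X(x,r)$, pick $r'$ with $d_X(x,z) < r' < r$. The $L$-LQ condition for $f_j$ gives $d_Y(f_j(x), f_j(z)) < Lr'$, and since $f_j(x) \to f(x)$ and $f_j(z) \to f(z)$ by local uniform convergence, the limit satisfies $d_Y(f(x), f(z)) \leq Lr' < Lr$.

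For the inner inclusion $B_Y(f(x), L\inv r) \subset fB_X(x,r)$ more care is required. Given $w \in B_Y(f(x), L\inv r)$, I would first choose $s < r$ with $d_Y(f(x), w) < L\inv s$, so that by local uniform convergence $d_Y(f_j(x), w) < L\inv s$ for $j$ large. The $L$-LQ condition for $f_j$ then produces $z_j \in B_X(x,s)$ with $f_j(z_j) = w$. Since $X$ is proper, the closed ball $\overline{B_X(x,s)}$ is compact, so a subsequence $z_j \to z \in \overline{B_X(x,s)} \subset B_X(x,r)$. Local uniform convergence on a compact neighbourhood of $z$, together with continuity of $f$, yields $f_j(z_j) \to f(z)$, and hence $f(z) = w$.

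The only technical point to watch is the open-ball versus closed-ball issue: passing to the limit at the sharp radius $r$ would only place points in the corresponding closed balls, so a cushion $r' < r$ or $s < r$ is needed in each step. The essential hypothesis is the properness of $X$, which is what promotes the sequence of approximate pre-images $(z_j)$ to a genuine pre-image of $w$.
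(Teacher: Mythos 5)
Your proof is correct and complete. The paper itself does not prove this lemma but only cites \cite[Lemma 3.1]{LeDonnePankka}, and your argument --- checking the outer inclusion by a direct limit of distances at a slightly smaller radius, and the inner inclusion by extracting a convergent subsequence of preimages $z_j \in B_X(x,s)$ via properness of $X$ --- is exactly the standard argument that reference supplies, with the open-versus-closed-ball cushion handled correctly.
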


We will also need the following result.
\begin{lemma}\label{lemma:LQBranchZeroMeasure}
  Let $G \subset \R^n$ be a domain and let
  $f \colon G \to \R^n$ be an $L$-LQ-mapping.
  Then $\mathcal{H}^n(B_f) = 0$.
\end{lemma}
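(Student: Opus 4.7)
The outer LQ inclusion $f(B(x,r)) \subset B(f(x), Lr)$ shows that $f$ is $L$-Lipschitz, so by Rademacher's theorem $f$ is differentiable at almost every $x_0 \in G$. At such an $x_0$, the rescaled mappings $h \mapsto (f(x_0 + rh) - f(x_0))/r$ on $\overline{B}(0,1)$ converge uniformly to the linear map $Df(x_0)$ as $r \to 0$, so passing the inner LQ inclusion $B(f(x_0), r/L) \subset f(B(x_0, r))$ to this limit yields $\overline{B}(0, 1/L) \subset Df(x_0)(\overline{B}(0,1))$. In particular $Df(x_0)$ is invertible with $|\det Df(x_0)| \geq L^{-n}$ for almost every $x_0 \in G$, so the Jacobian is uniformly bounded away from zero on the set of differentiability.

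The plan is then to show that at almost every such $x_0$ the mapping $f$ is in fact a local bi-Lipschitz homeomorphism, which forces $x_0 \notin B_f$. For this I would restrict further to Lebesgue points of the measurable matrix-valued function $x \mapsto Df(x)$, still a set of full measure, so that $Df$ is approximately continuous at $x_0$. Combining this approximate continuity with the uniform invertibility bound from the previous paragraph and with the openness of $f$, one should be able to upgrade the infinitesimal Taylor picture at $x_0$ into a genuine local bi-Lipschitz inverse of $f$ on a small ball around $x_0$. The exceptional set where one of these properties fails is then a null set containing $B_f$, giving $\mathcal{H}^n(B_f) = 0$.

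The main obstacle is this last promotion step. Pointwise differentiability together with invertibility of $Df(x_0)$ is not by itself enough to force local injectivity of a Lipschitz map, as the derivative can oscillate badly away from $x_0$; one must genuinely use the openness of $f$ together with the $L^1$-regularity of $Df$ at its Lebesgue points. This is in the spirit of Reshetnyak's theory of mappings of bounded distortion $\abs{Df}^n \leq L^{2n} |J_f|$, but the argument has to be carried out carefully and only locally, so as not to tacitly resolve the still-open global discreteness problem for $L$-LQ-mappings $\R^n \to \R^n$ in dimensions $n \geq 3$ mentioned in the introduction.
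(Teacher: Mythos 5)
Your first step is correct and is a nice observation: the blow-ups $h\mapsto (f(x_0+rh)-f(x_0))/r$ converge uniformly on $\overline{B}(0,1)$ to $Df(x_0)$ at every point of differentiability, the inner LQ inclusion survives this limit, and hence $Df(x_0)$ is surjective with $|\det Df(x_0)|\ge L^{-n}$ at \emph{every} differentiability point of $f$ -- a set of full measure by Rademacher, since $f$ is $L$-Lipschitz.

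The gap is exactly where you locate it, and the mechanism you propose for closing it is not the right one. There is no theorem asserting that a Lipschitz open map is locally injective at a Lebesgue point of $Df$ where $Df$ is invertible; approximate continuity is an $L^1$ condition and does not control the behaviour of $f$ on the lower-dimensional sets near $x_0$ where injectivity could fail, so the "promotion to a genuine local bi-Lipschitz inverse" cannot be carried out this way. The correct tool is degree theory, and it needs only pointwise differentiability, with no Lebesgue point hypothesis: if $f$ is differentiable at $x_0$ with $J_f(x_0)\ne 0$, then for small $r$ the restriction of $f$ to $\partial B(x_0,r)$ is homotopic in $\R^n\setminus\{f(x_0)\}$ to the affine map $x\mapsto f(x_0)+Df(x_0)(x-x_0)$, so the local degree of $f$ at $f(x_0)$ over $B(x_0,r)$ is $\pm 1$; for a sense-preserving, discrete, open map this forces the local index $i(x_0,f)$ to equal $1$, i.e.\ $x_0\notin B_f$. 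This is precisely \cite[Lemma I.4.11]{Rickman}, which the paper invokes in the contrapositive form "$x\in B_f$ and $f$ differentiable at $x$ imply $J_f(x)=0$". Once that input is granted, your first step actually yields a proof shorter than the paper's: every differentiability point has $J_f\ne 0$ and hence lies outside $B_f$, so $B_f$ is contained in the non-differentiability set, which is null. Note that this ingredient genuinely uses discreteness (or at least sense-preservation plus isolation of $x_0$ in its fibre), which is the setting in which the lemma is applied in Lemma \ref{Lemma:Euclidean}.

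For comparison, the paper never addresses local injectivity at individual points. It combines the vanishing of $J_f$ at differentiable branch points, the bound $|J_f|\le L^n$ off $B_f$, the volume lower bound $\mathcal{H}^n(fB(x,r))\ge L^{-n}\mathcal{H}^n(B(x,r))$ coming from the inner LQ inclusion, and the area formula, to obtain $\mathcal{H}^n(B_f\cap B(x,r))\le (1-L^{-2n})\,\mathcal{H}^n(B(x,r))$ for all $x$ and admissible $r$; hence $B_f$ has no density points and is null. Either way, the degree-theoretic fact is the indispensable ingredient your proposal is missing.
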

\begin{proof}
  Outside the branch set $f$ is locally
  $L$-bilipschitz.
  Thus $|J_f(x)| \leq L^n$ for all points
  $x \notin B_f$ where $f$ is differentiable. 
  On the other hand,
  for any point $x \in B_f$, where
  $f$ is differentiable it holds
  by e.g.\ \cite[Lemma I.4.11]{Rickman}
  that $J_f(x) = 0$.

  Let $x \in G$ and 
  $0 < r < d(x,\partial G)$.
  By the coarea formula 
  \begin{align*}
    L^{-n} &\mathcal{H}^n(B_{\R^n}(f(x),r))
    \leq \mathcal{H}^n(fB_{\R^n}(x,r))
    \leq \int_{fB_{\R^n}(x,r)} N(y,f,B_{\R^n}(x,r))\,\operatorname{d}y \\
    &= \int_{B_{\R^n}(x,r)} |J_f| 
    = \int_{B_{\R^n}(x,r)\cap B_f} |J_f| + \int_{B_{\R^n}(x,r)\setminus B_f} |J_f| \\
    &\leq L^n\mathcal{H}^n(B_{\R^n}(x,r)\setminus B_f).
  \end{align*}
  Thus
  \begin{align*}
    \mathcal{H}^n(B_f \cap B_{\R^n}(x,r))
    \leq \mathcal{H}^n(B_{\R^n}(x,r))( 1 - L^{-2n}).
  \end{align*}
  We conclude that the set $B_f$ has no density points,
  hence $\mathcal{H}^n(B_f) = 0$.
\end{proof}

\section{Proofs of main theorems}

\subsection{A characterization of BLD-mappings}

We prove Theorem \ref{thm:Characterization}
by showing the two implications separately.
The fact that an $L$-BLD-mapping 
is $L$-LQ is noted already in 
\cite[Theorem 4.20]{MartioVaisala} and 
\cite[Remark 3.16(c)]{HeinonenRickman}.
The other direction follows as stated
from the following two lemmas.

\begin{lemma}\label{Lemma:Euclidean}
  Let $G \subset \R^n$ be a domain and let
  $f \colon G \to \R^n$ be a 
  discrete $L$-LQ 
  mapping, that is,
  the LQ-inequality \eqref{eq:LQ-def}
  holds for balls $B(x,r) \subset G$.
  Then $f$ is an $L$-BLD-mapping.
\end{lemma}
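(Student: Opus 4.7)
The plan is to establish the two length inequalities in the definition of BLD separately. The upper bound $\ell(f \circ \beta) \leq L\,\ell(\beta)$ is immediate: the outer LQ inclusion $fB(x,r) \subset B(f(x), Lr)$ makes $f$ locally $L$-Lipschitz, and since $G$ is a length space this upgrades at once to a global $L$-Lipschitz estimate, which then passes to path lengths.

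For the lower bound $\ell(f \circ \beta) \geq L\inv \ell(\beta)$, I plan first to show that $f$ is locally $L$-bilipschitz away from the branch set $B_f$. Outside $B_f$ the map $f$ is a local homeomorphism — by continuity, openness and discreteness — so at any $x \notin B_f$ there is a ball $B(x,\rho)$ on which $f$ is injective. For $y,y'$ in a suitably smaller ball and any $\varepsilon > 0$, the inner LQ inclusion applied to $B(y,\, L|f(y)-f(y')| + \varepsilon)$ exhibits some preimage of $f(y')$ in that ball; arranging parameters so the ball lies inside $B(x,\rho)$, injectivity forces this preimage to be $y'$, giving $|y - y'| \leq L|f(y) - f(y')| + \varepsilon$ and, on $\varepsilon \downarrow 0$, the lower bilipschitz bound. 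Combined with the Lipschitz upper bound this shows $f$ is locally $L$-bilipschitz on $G \setminus B_f$.

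Consequently, whenever a path $\beta$ has image disjoint from $B_f$, a finite partition of $[0,1]$ into sub-paths each lying in a bilipschitz ball, together with summation of the piecewise estimates, yields $\ell(f \circ \beta) \geq L\inv \ell(\beta)$. The main obstacle is to handle paths meeting $B_f$: for $n \geq 3$ the branch set can have topological dimension as large as $n-2$, so $|\beta| \cap B_f$ and indeed $\beta\inv(B_f)$ can have positive one-dimensional measure. I would combine Lemma \ref{lemma:LQBranchZeroMeasure} (giving $\mathcal{H}^n(B_f) = 0$) with the topological smallness of $B_f$ and Rademacher's theorem: since $f$ is $L$-Lipschitz and, by the previous step, locally bilipschitz off $B_f$, it is differentiable almost everywhere in $G$ with $|Df(x)v| \geq L\inv|v|$ outside $B_f$. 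For an arclength-parameterized $\beta$ I would then write $\ell(f \circ \beta) = \int_0^1 |(f \circ \beta)'(t)|\,dt$ and seek the pointwise estimate $|(f \circ \beta)'(t)| \geq L\inv|\beta'(t)|$ on a full-measure subset of $[0,1]$. The technical crux is to verify this bound almost everywhere on $\beta\inv(B_f)$, where the chain rule and the interaction between the measure-zero set $B_f \subset \R^n$ and the one-dimensional trace of $\beta$ must be handled with care; a workable alternative would be to instead approximate $\beta$ by paths avoiding $B_f$, though one has to take care that lower semicontinuity of length runs in the unhelpful direction and so the perturbation has to be designed to control $\ell(\beta_j)$ and $\ell(f \circ \beta_j)$ simultaneously.
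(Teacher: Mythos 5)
Your handling of the upper length bound and of the local $L$-bilipschitz property of $f$ on $G \setminus B_f$ is correct and agrees with the first half of the paper's argument (the paper also records, via Lemma \ref{lemma:LQBranchZeroMeasure}, that $\mathcal{H}^n(B_f)=0$). The unresolved step, however, is the heart of the lemma, and the route you sketch for it cannot be completed as stated. At a point $x \in B_f$ where $f$ is differentiable one has $J_f(x)=0$ (this is precisely what the paper uses in the proof of Lemma \ref{lemma:LQBranchZeroMeasure}, citing \cite[Lemma I.4.11]{Rickman}), so $Df(x)$ is singular there and no pointwise bound $|Df(x)\mathbf{v}|\geq L\inv|\mathbf{v}|$ is available on the branch set; worse, Rademacher only gives differentiability $\mathcal{H}^n$-a.e., which says nothing about $\mathcal{H}^1$-a.e. points of a curve lying inside the null set $B_f$. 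Hence the desired estimate $|(f\circ\beta)'(t)|\geq L\inv|\beta'(t)|$ cannot be extracted from the differential of $f$ on $\beta\inv(B_f)$, and for $n\geq 3$ that set may have positive measure. Your fallback of perturbing $\beta$ off $B_f$ fails for the reason you yourself note: lower semicontinuity gives $\ell(f\circ\beta)\leq\liminf_j\ell(f\circ\beta_j)$, which is the wrong direction for transferring the lower bound $\ell(f\circ\beta_j)\geq L\inv\ell(\beta_j)$ to the limit.

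The paper closes this gap by a different mechanism. From the a.e.\ bound $L\inv\leq\min_{|\mathbf{v}|\leq 1}|Df(x)\mathbf{v}|\leq\|Df(x)\|\leq L$ and $J_f\neq 0$ on $G\setminus B_f$, together with the connectedness of $G\setminus B_f$ (the branch set has topological dimension at most $n-2$), it concludes that $J_f$ has constant sign almost everywhere; thus $f$ is a (sense-preserving or sense-reversing) quasiregular map whose Jacobian and minimal stretching are bounded away from zero a.e., and the path-length inequality \eqref{eq:BLD} for \emph{all} paths, including those meeting $B_f$, then follows from the distributional-derivative argument in the proof of \cite[Theorem 2.16]{MartioVaisala}. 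That piece of machinery, not a pointwise chain rule along the path, is what carries the lower estimate across the branch set, and it is the ingredient missing from your proposal; to make your proof complete you would need to reproduce it or an equivalent argument.
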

\begin{proof}
  A discrete LQ-mapping is a branched cover, so its branch set
  is closed and 
  has by \cite{Vaisala} topological dimension of at most $n-2$.
  Furthermore,
  $\mathcal{H}^n( B_f ) = 0$
  by Lemma \ref{lemma:LQBranchZeroMeasure}

  By the definition of LQ-mappings, it follows that
  $f$ is locally $L$-bilipschitz in $G \setminus B_f$.
  Especially for almost all $x \in G \setminus B_f$ we have
  \begin{align}\label{eq:LQdiff}
    L \inv 
    \leq \min_{| \mathbf{v} | \leq 1} | Df(x) \mathbf{v} |  
    \leq \| Df(x) \|
    \leq L
  \end{align}
  and $J_f(x) \neq 0$.
  Since $G \setminus B_f$ is locally connected,
  either $J_f > 0$ a.e.\ or $J_f < 0$ a.e.\ in $G \setminus B_f$.
  The branch set has measure zero so 
  almost everywhere in $G$ 
  the Jacobian is either strictly positive or strictly negative 
  and inequality \eqref{eq:LQdiff} holds. Now
  an argument with the mappings distributional
  derivative shows that
  $f$ satisfies the path-length inequality
  \eqref{eq:BLD},
  see for example the proof of
  \cite[Theorem 2.16]{MartioVaisala}.
\end{proof}

\begin{lemma}
  Let $f \colon M \to N$ be a discrete 
  $L$-LQ-mapping between manifolds of 
  $C$-bounded geometry. Then $f$ is $(C^4 L)$-BLD.
  If $M$ and $N$ have strongly bounded geometry,
  $f$ is $L$-BLD.
\end{lemma}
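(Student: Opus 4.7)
The plan is to reduce to the Euclidean case of Lemma~\ref{Lemma:Euclidean} by conjugating $f$ locally with the bilipschitz charts provided by bounded geometry. Each of the two charts contributes one factor of $C$ to the LQ-constant of the conjugated map and one factor of $C$ to the comparison of path lengths, which together give the overall factor of $C^4$.

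Fix a point $x \in M$ and, using the bounded geometry of $M$ and $N$, choose $C$-bilipschitz homeomorphisms $\phi \colon B_M(x,R) \to B_{\R^n}(0,R)$ and $\psi \colon B_N(f(x),R) \to B_{\R^n}(0,R)$. Using the $L$-LQ-inclusion $f B_M(x,r) \subset B_N(f(x),Lr)$ I would choose $r_0 > 0$ small (of order $R/(L(1+2C^2))$) so that for every ball $B(y,s)$ contained in $G := \phi(B_M(x,r_0)) \subset \R^n$ the image $f(\phi\inv(B(y,s)))$ and the slightly larger image ball produced by the LQ inclusion sit inside the domain of $\psi$. The conjugated map $\tilde f := \psi \circ f \circ \phi\inv \colon G \to \R^n$ is then discrete, and by chasing a ball through bilipschitz, then LQ, then bilipschitz one verifies
\[
  B(\tilde f(y), (C^2 L)\inv s)
  \subset \tilde f(B(y,s))
  \subset B(\tilde f(y), C^2 L s)
\]
for every $B(y,s) \subset G$. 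Lemma~\ref{Lemma:Euclidean} then yields that $\tilde f$ is $(C^2 L)$-BLD.

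Transferring the inequality back to $f$ is the next step. For a path $\beta$ with image in $B_M(x,r_0)$, the $C$-bilipschitzness of $\phi$ and $\psi$ gives $C\inv \ell(\beta) \leq \ell(\phi \circ \beta) \leq C\,\ell(\beta)$ and $C\inv \ell(f \circ \beta) \leq \ell(\psi \circ f \circ \beta) \leq C\,\ell(f \circ \beta)$; applying the $(C^2 L)$-BLD inequality to $\tilde f \circ (\phi \circ \beta) = \psi \circ f \circ \beta$ and combining these length estimates produces the $(C^4 L)$-BLD inequality for $\beta$. For an arbitrary path $\beta \colon [0,1] \to M$ I would cover its compact image by finitely many balls of the above form and, via a Lebesgue number argument, subdivide $\beta$ into subpaths each contained in a single such ball; the inequality for the whole path then follows from the additivity of length. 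Under the strongly bounded geometry hypothesis, a radius $R$ exists for any $C > 1$, so the preceding argument yields the $(C^4 L)$-BLD inequality for every $C > 1$, and passing $C \to 1^+$ gives the sharper $L$-BLD conclusion.

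The main technical point is making sure that the conjugated map $\tilde f$ satisfies the LQ-inequality on \emph{every} ball $B(y,s) \subset G$, and not merely on sufficiently small ones, since Lemma~\ref{Lemma:Euclidean} is stated under this uniform hypothesis. The choice of $r_0$ takes care of this: since $G \subset B_{\R^n}(0, C r_0)$ every admissible ball satisfies $s \leq 2 C r_0$, and a direct check shows that taking $r_0$ of order $R/(L(1+2C^2))$ forces all intermediate image balls arising in the LQ-chase to lie inside $B_N(f(x),R)$, where $\psi$ is defined.
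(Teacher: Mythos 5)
Your proposal is correct and follows essentially the same route as the paper: conjugate $f$ locally by the $C$-bilipschitz charts to obtain a discrete $(C^2L)$-LQ map between Euclidean domains, apply the Euclidean lemma, transfer the length inequality back (picking up the remaining factor $C^2$), use locality of the BLD condition, and let $C \to 1$ in the strongly bounded case. Your additional care about the choice of $r_0$ and the subdivision of arbitrary paths merely fills in details the paper leaves implicit.
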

\begin{proof}
  For a discrete $L$-LQ-mapping $f \colon M \to N$
  between manifolds of boun\-ded geometry, we can
  use locally the $C$-bilipschitz charts given 
  by the bounded geometry condition. This means
  that locally our discrete $L$-LQ-mapping is conjugated
  to a discrete $(C^2 L)$-LQ-mapping between Euclidean
  domains.
  By Lemma \ref{Lemma:Euclidean}
  the map $f$ is locally $(C^4L)$-BLD.
  Since the BLD condition is local, the mapping
  $f$ is BLD.
  For manifolds of strongly bounded geometry
  we can take $C \to 1$ and the claim follows.
\end{proof}

Theorem \ref{thm:Characterization} is now proven.

\begin{remark}
  We note the following self improving property.
  Consider an
  LQ-mapping between manifolds of bounded
  geometry. The added requirement of discreteness 
  will make the LQ-map a BLD-mapping, so it will 
  especially be uniformly finite-to-one.
  It is not hard to see that the proof of
  Theorem \ref{thm:Martina} holds for
  a uniformly finite-to-one LQ-mapping, so
  the self-improvement
  continues as the mapping is then complete spread
  in the sense of Theorem \ref{thm:Martina}.
\end{remark}

\subsection{Limits of BLD-mappings}

The main reason we characterized BLD-mappings
as discrete LQ-mappings is because the limits
of $L$-LQ-mappings are easily seen to be 
$L$-LQ as we saw in Section 2. 
With the connection between BLD-mappings 
and discrete LQ-mappings 
we can thus show that locally uniform 
limits of $L$-BLD-mappings
are $L$-LQ.
To show the discreteness
of the limit map we need 
the mappings in the sequence
to be $R$-locally $k$-to-one with uniform constants
$R$ and $k$.
\begin{proof}[Proof of Theorem \ref{theorem:LQlimit}]
  We know by \ref{lemma:LQLimitIsLQ} that
  the limits of $L$-LQ-mappings are $L$-LQ.
  Suppose the limit map $f \colon M \to N$ is not 
  $R$-uniformly $k$-to-one.
  Then there
  exists a point $x_0 \in M$ such that 
  \begin{align*}
    \# \left( f \inv \{ f(x_0) \} \cap B_M(x_0,R) \right) 
    \geq k+1.
  \end{align*}
  Fix $k + 1$ points
  $z_0, \ldots, z_k$
  in $f \inv \{ y_0 \} \cap B_M(x_0, R)$,
  and denote the minimum of their pairwise distances
  by $\delta$. We fix
  \begin{align*}
    0 
    < \eps 
    < \min \left( \delta, 
    \min_{0 \leq j \leq k} \left( R - d(z_j , x_0) \right)
    \right)
  \end{align*}
  and let $j_0 \geq 1$ be such an index that
  for all $j \geq j_0$ 
  \begin{align*}
    \sup_{x \in B_M(x_0,R)} d(f_j(x),f(x))
    < \eps / (2L).
  \end{align*}
  Note that especially
  we have
  $f_j(z_k) \in B_N(y_0, \eps / (2L))$
  for all $j \geq j_0$. 
  The images
  of the balls $B_M(z_i, \eps / 2)$,
  $i = 0, \ldots, C$,
  under the $L$-LQ-mappings $f_j$,
  contain balls of radius 
  $\eps / (2L)$. 
  Thus
  \begin{align*}
    y_0 \in \bigcap_{i = 0}^{C} f_j B_M(z_i, \eps / 2).
  \end{align*} 
  In particular
  $
  N(f_j(x_0),f_j,B_M(x_0,R)) 
  \geq k+1.
  $
  for $j \geq j_0$.
  This contradicts
  Lemma \ref{lemma:GeometricIndexBound}.
  The limit mapping is thus $R$-uniformly $k$-to-one.
\end{proof}

\begin{proof}[Proof of Corollary \ref{coro:BLDLimit}]
  By Lemma \ref{lemma:GeometricIndexBound} and
  Theorem \ref{thm:Characterization} mappings
  $f_j \colon M \to N$
  are $R$-uniformly $k$-to-one $L$-LQ-mappings.
  Thus by Theorem \ref{theorem:LQlimit}
  the limit map $f \colon M \to N$ is a discrete $L$-LQ-mapping.
  The claim now follows from
  Theorem \ref{thm:Characterization}.
\end{proof}

\subsection{Global properties}
In this section we show how the local result of
Lemma \ref{lemma:GeometricIndexBound} implies
global properties of BLD-mappings. 
We recall first an elementary observation
on local surjectivity of
open maps between locally connected metric spaces.
Note that the lemma gives local surjectivity only
in precompact components of a pre-image of a given domain.
\begin{lemma}\label{lemma:LocalSurjectivity}
  Let $X$ and $Y$ be locally connected metric spaces,
  $f \colon X \to Y$ be a continuous 
  open map and $V \subset Y$ a domain.
  Then, for each precompact component
  $U$ of $f \inv V$,
  the restriction $f|_U \colon U \to V$
  is surjective.
\end{lemma}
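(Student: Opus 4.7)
The plan is to show that $f(U)$ is simultaneously nonempty, open and closed in $V$, so that connectedness of $V$ forces $f(U)=V$. Nonemptiness is immediate since $U\subset f^{-1}V$. Openness follows directly from the hypothesis that $f$ is an open map: $f(U)$ is open in $Y$ and contained in $V$, hence open in $V$.

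The substantive step is closedness of $f(U)$ in $V$, and this is where the precompactness of $U$ and the local connectedness of $X$ both enter. I would take a sequence $y_n=f(x_n)\in f(U)$ with $x_n\in U$ and $y_n\to y\in V$. Since $\overline U$ is compact, after passing to a subsequence $x_n\to x\in \overline U$, and continuity gives $f(x)=y\in V$, so $x\in f^{-1}V$.

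The main obstacle is then to upgrade $x\in\overline U$ to $x\in U$, i.e.\ to rule out that the limit point lies on the topological boundary of the component $U$. Here I invoke that $f^{-1}V$ is open (by continuity of $f$ and openness of $V$) in the locally connected space $X$; consequently every component of $f^{-1}V$, including $U$, is open. Let $U'$ be the component of $f^{-1}V$ containing the limit $x$. Then $U'$ is an open neighborhood of $x$, and since $x\in\overline U$ it must meet $U$. Components of $f^{-1}V$ are either equal or disjoint, so $U'=U$, giving $x\in U$ and $y=f(x)\in f(U)$.

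Combining these three observations, $f(U)$ is a nonempty clopen subset of the connected set $V$, so $f(U)=V$, which is the claim. No estimates are required; the argument is purely point-set topological and uses only continuity, openness of $f$, local connectedness of $X$, and precompactness of the component $U$.
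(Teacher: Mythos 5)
Your proof is correct and is essentially the paper's argument in a slightly different packaging: the paper assumes $fU\neq V$ and derives a contradiction at a boundary point of $fU$ in $V$, whereas you show directly that $f(U)$ is a nonempty clopen subset of $V$; the substantive steps (compactness of $\overline U$ to extract a convergent subsequence of preimages, openness of components of $f^{-1}V$ via local connectedness, and the component argument upgrading $x\in\overline U$ to $x\in U$) are identical. No gaps.
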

\begin{proof}
  Let $U$ be a precompact component of $f \inv V$. 
  Since $X$ is locally connected and $f \inv V$
  is open, $U$ is an open set.
  Suppose $V \setminus fU \neq \emptyset$.
  Since $V$ is connected and intersects both $fU$ and its
  complement, there exists a point
  $z \in V \cap \partial fU$.
  Let $(z_n)$ be a sequence of points in $fU$ converging to $z$. For each
  $n \in \N$ we fix a point $y_n \in U \cap f \inv \{ z_n \}$.
  Since $\overline{U}$ is compact, there exists a subsequence
  $(y_{n_k})$ of $(y_n)$ converging to a point $y \in \overline{U}$.
  Since $f$ is continuous, $f(y) = z$.
  
  Since $z \in V$ and $y \in f \inv \{ z \}$,
  we may fix a component $U'$ of $f \inv V$ containing $y$.
  Since $U'$ is open and $y \in \overline{U}$, 
  we have $U \cap U' \neq \emptyset$. Thus $U' \subset U$
  and hence $y \in U$.
  On the other hand, 
  since $f$ is an open map, $fU$ is an open neighbourhood
  of $z = f(y)$ in $Y$,
  which implies
  $z \notin \partial fU$. This is a contradiction and the restriction 
  $f|_U \colon U \to V$ is surjective.
\end{proof}

Now we show that BLD-mappings between 
manifolds of bounded geometry are
similar to covering maps.
\begin{proof}[Proof of Theorem \ref{thm:Martina}.]
  Let $f$ be $R$-uniformly $k$-to-one
  and fix 
  \begin{align*}
    D
    := 2L(k+1),
    \quad
    \text{ and }
    \quad
    R_\text{inj} := \frac{R}{2L(k+1)}.
  \end{align*}
  Let $0 < r \leq R_{\text{inj}}$ and
  suppose there exists a point $y_0 \in N$
  such that the pre-image
  $f \inv B_N(y_0,r)$ contains
  a component $U$ of $f \inv B_N(y_0,r)$
  having diameter at least 
  $Dr$.
  Fix a point $x_0 \in U \cap f \inv \{ y_0 \}$
  and denote 
  $\eps = 2 L r$.
  Since $U$ is connected the 
  intersection $U \cap \partial B(x_0,s)$
  is non-empty for $ s \in (0, Dr / 2)$.
  Thus we can fix 
  $k + 1$ points 
  $z_0, \ldots, z_k \in U$
  having pair-wise distances
  of at least $\eps$ and for which
  $B_M(z_i, \frac{\eps}{2}) \subset B_M(x_0, Dr)$
  for $j = 0, \ldots, k$.

  As in the proof of
  Theorem
  \ref{theorem:LQlimit}
  we observe that
  \begin{align*}
    y_0
    \in \bigcap_{j = 0}^{k} f B_M(z_i, \frac{\eps}{2}),
  \end{align*}
  since $f$ is an $L$-LQ-mapping.
  Because 
  the balls $B_M(z_i, \frac{\eps}{2}) \subset B_M(x_0, Dr)$,
  $j = 0, \ldots, k$, are pairwise
  disjoint and
  $Dr \leq D R_\text{inj} = R$, we have
  \begin{align*}
    N(f(x_0),f,B_M(x_0,R)) 
    \geq N(f(x_0),f,B_M(x_0,Dr)) 
    \geq k+1.
  \end{align*}
  This contradicts the choice of $R$ and $k$.
  Thus $\diam ( U ) < Dr$.

  By Theorem \ref{thm:Characterization} the mapping $f$
  is an $L$-LQ-mapping and as such a continuous open map.
  It follows from the first inclusion of the equation \eqref{eq:LQ-def}
  defining LQ-mappings that an LQ-mapping is always surjective;
  for any fixed $x \in M$ the images of 
  the balls $B(x,r)$, $r > 0$, cover all of $N$.
  Especially the pre-image $f \inv B_N(y,r)$ will be non-empty
  for all $y \in N$ and $r < R_\text{inj}$,
  and by the first part of the proof its each component $U$ is bounded.
  Since $M$ is a proper metric space as a manifold of bounded geometry,
  the set $U$ is a precompact set as a bounded subset of 
  a proper metric space. Also note that $M$ and $N$ are locally connected 
  since they are manifolds.
  Thus the surjectivity of the restriction 
  $f|_U \colon U \to B_N(y, r)$
  follows from Lemma
  \ref{lemma:LocalSurjectivity}.
\end{proof}
\begin{remark}
  Note that for mappings $f \colon M \to N$
  satisfying the conclusion of Theorem \ref{thm:Martina},
  it especially holds that for small enough $r$ and for any $x \in M$
  the $x$-component $U(x,f,r)$ of $B_N(f(x),r)$ has diameter at most
  $D r$. In a doubling metric space, such as manifolds of bounded
  geometry, this implies that the mapping is locally regular.
  Thus
  to prove a qualitative version of the limit theorem, we
  could first show that the limit of a sequence of $L$-BLD-mappings is 
  an $R$-uniformly $k$-to-one $L$-LQ-mapping, then use Theorem \ref{thm:Martina}
  to show that such a map is regular, and then use methods
  of Heinonen, Keith and Rickman to show that a regular map is BLD.
\end{remark}

From the the previous result it follows that a BLD-mapping has no
asymptotic values.
\begin{proof}[Proof of Corollary \ref{coro:BLDHasNoAsymptoticValues}.]
  By Theorem \ref{thm:Martina} the lifts of sufficiently
  short paths are bounded. This forbids asymptotic values.
\end{proof}

From the lack of asymptotic values it follows that
a locally homeomorphic BLD-mapping between manifolds
of bounded geometry is a covering map.
\begin{proof}[Proof of Corollary \ref{coro:Zorich}]
  Since universal covers $\tilde M$ and 
  $\tilde N$ 
  are also manifolds of bounded geometry, a lift
  $\tilde f \colon \tilde M \to \tilde N$ of $f$
  has no asymptotic values by 
  Corollary \ref{coro:BLDHasNoAsymptoticValues}.
  A standard path-lifting argument of local homeomorphism
  without asymptotic values shows that $\tilde f$
  is invertible and thus a homeomorphism. Thus
  $f \colon M \to N$ is a covering map.
\end{proof}

\begin{remark}
As a final remark we note that results 
in this last section generalize to any proper path-metric
space where the conclusion of Lemma \ref{lemma:GeometricIndexBound}
holds. Alternatively, these result will 
hold true for the class of 
$R$-uniformly $k$-to-one
$L$-LQ-mappings.
\end{remark}



\def\cprime{$'$}\def\cprime{$'$}

\end{document}